\newtheorem{theorem}{Theorem}[section]
\newtheorem{proposition}[theorem]{Proposition}
\newtheorem{corollary}[theorem]{Corollary}
\newtheorem*{Theorem1'}{Theorem 1'}
\theoremstyle{definition}
\newtheorem{definition}[theorem]{Definition}
\theoremstyle{remark}
\numberwithin{equation}{section}
\newcommand \p{\mathrm{pd}}
\newcommand \ap{\mathrm{apd}}
\newcommand \di{\mathrm{dim}}
\newcommand \dm{{\mathrm {dim}}}
\newcommand \Z{{\mathbb Z}}
\newcommand \N{{\mathcal N}}
\newcommand{\z}{\zeta}
\newcommand{\E}{\widetilde{\text{Ext}}}
\newcommand{\e}{\text{Ext}}
\title{A NEW HOMOLOGICAL INVARIANT FOR MODULES}
\author{Mohammadali Izadi}
\begin{document}

\title{A NEW HOMOLOGICAL INVARIANT FOR MODULES}


\address{School of Science and Environment (Mathematics) \\ Grenfell Campus, Memorial University of Newfoundland \\
Corner Brook, NL, A2H 6P9, Canada } 

\email{mizadi@grenfell.mun.ca}

\subjclass[2010]{13D05, 13D03, 55N35.}
 
 \keywords{Gorenstein rings, Homological dimensions, Vogel cohomology.}

\maketitle
\begin{abstract}
 Let $R$ be a commutative Noetherian local ring with residue field $k$.
Using the structure of Vogel cohomology, for any finitely generated module $M$,
we introduce a new dimension, called $\zeta$-dimension, denoted by $\zeta\text{-}\dm_R M$. This
dimension is finer than Gorenstein dimension and has nice properties enjoyed by
homological dimensions. In particular, it characterizes Gorenstein rings in the
sense that: a ring $R$ is Gorenstein if and only if every finitely generated $R$-module
has finite $\zeta$-dimension. Our definition of $\zeta$-dimension offer a new homological
perspective on the projective dimension, complete intersection dimension of
Avramov et al. and G-dimension of Auslander and Bridger.
\end{abstract}

\section{Introduction}

Let $M$ be a finitely generated module over a commutative Noetherian ring $R$.
There are several homological invariants assigned to $M$. The most important one
is projective dimension $\text{pd}_R M$. Auslander and Bridger [AB] singled out the class
of R-modules of finite Gorenstein dimension ($G$-dimension) as a generalization of
modules of finite projective dimension. Avramov et al. [AGP] introduced the concept
of complete intersection (CI)-dimension. The purpose of this paper is to offer a
new dimension that we believe gives a new homological perspective on the aforementioned
concepts. We show that the above dimensions can be considered as special
cases of a much more general dimension that we shall call it $\zeta$-dimension. To introduce
this dimension, we use the structure of Vogel cohomology developed by Pierre
Vogel in 1980. The cohomology theory that he developed, associates to each pair
$(M,N)$ of modules, a sequence of $R$-modules $\E_R^n(M,N)$ for $n \in \Z$, and comes equipped with a natural transformation $\z^*(M,N):\e_R^n(M,N)\rightarrow \E_R^n(M,N)$ of
cohomology functors. For any $R$-module $M$ and any integer $i$, we let $\z^i(M)$ denote
the natural map $\e_R^i(M,k)\rightarrow \E_R^i(M,k)$, where $k$ is the residue field of $R$, and define $\z\text{-}\text{dim}_RM$ to be the infimum $n \in \N$, such that $\z^n(M)$ is epimorphism and $\z^n(M)$ is isomorphism for all $i > n$. Note that $\text{pd}_RM$, when it is finite, is equal
to the supremum of $i$'s such that $\e_R^i(M,k)\neq 0$. On the other hand, $\E_R^i(M,~)$ will vanish, for all $i \in \Z$. So in fact $\p_RM$ is equal to the infimum of $i$'s such that $\e_R^i(M,k)\cong \E_R^i(M,K)$, that is $\p_RM = \z\text{-}\text{dim}_RM$. Moreover, if $G\text{-}\di_RM$ is
finite, we shall show that it can be interpreted as the vanishing of certain $\z^i$’s (see
Theorem 1.4).

Moreover we examine the ability of $\z$-dimension to detect Gorensteinness of the
underlying ring: it is finite for all modules over a Gorenstein ring, conversely if
$\z\text{-}\di_R k <\infty$ the ring is Gorenstein. $\z$-dimension shares many basic properties
with other homological dimensions. In particular, it localizes. In an attempt to
find a lower bound for $\z$-dimension, in Theorem 1.8, for any finitely generated Rmodule
$M$ we obtain the following inequality $\text{apd}_RM \leq\z\text{-}\di_RM$, with equality if
$\z-\di_RM$ is finite. We recall that $\ap_RM$ is defined by the formula

$$\ap_RM=\text{sup} \Bigg\{ i\in \mathbb{N}_0 \begin{array}{c|c}  & \e_R^i(M,T)\neq 0~\text{for some finitely generated}\\ &~R\text{-}\text{module}~T~\text{with}~\p_RT<\infty\end{array}\Bigg\}.$$
So the place of $\z-di_R(M)$ in the hierarchy of homological dimensions is determined
as
$$\ap_RM\leq \z\text{-}\di_RM\leq \text{G}\text{-}\di_R(M)\leq \text{CI}\text{-}\di_R(M)\leq \p_R(M),$$
with equality to the left of any finite ones. No example of a module $M$ with
$\z\text{-}\dm_RM < \text{G-}\dm_RM$ is known at present to the authors.

Towards the end of the paper, we deal with the resolving property of the category
of modules of finite $\z$-dimension. Throughout the paper $(R,\mathfrak{m}, k)$ is a commutative Noetherian local ring with residue field $k$.

\section{$\z$-DIMENSION}

We begin by recalling the construction of Vogel cohomology. First let us mention
that, abusing notation we shall use the symbol $(A,B)$ for the graded Hom-functor applied to graded $R$-modules $A$ and $B$. Thus $(A,B)_n=\Pi_{i\in\Z} \text{Hom}_R(A_i,B_{i+n})$. The differential is defined on $(A,B)$ by the formula $\partial(f)(x)=\partial(f(x))-(-1)^{\text{deg}f}f(\partial(x))$, where $x\in A$, thus making $(A,B)$ into a complex.

Let $M$ and $N$ be finitely generated $R$-modules and $P_M$ and $P_N$ denote their projective resolutions, respectively. We shall use $P_M$ (resp. $P_N$) to denote the corresponding
underlying graded module. The subset $(P_M,P_N)_b$ of bounded homogeneous maps (a homogeneous map is called bounded if only finitely many components of that map are non-zero) is a graded submodule of $(P_M,P_N)$. The restriction of $\partial$ to $(P_M,P_N)_b$ make it into a subcomplex of $(P_M,P_N)$. The corresponding quotient
complex will be of fundamental important to us. We denote by $(\widetilde{P_M,P_N})$ the quotient complex
$$(\widetilde{P_M,P_N})=(P_M, P_N)/(P_M, P_N)_b.$$
Passing on to cohomology we obtain Vogel cohomology. It will be denoted by $\E_R^*(M,N)$. Moreover the short exact sequence
$$0\rightarrow (P_M, P_N)_b\rightarrow (\widetilde{P_M,P_N})\rightarrow 0,$$
where the cohomology of the middle term is just $\e_R^*(M, N)$, yields upon passing to corresponding long cohomology exact sequence, a natural transformation $\z^*(M, N): \e_R^*(M, N)\rightarrow \E_R^*(M, N).$

By essentially following the same argument analogous to ordinary cohomology, one
can see that $\E_R^*$ is a cohomological functor, independent of the choice of projective resolutions of $M$ and $N [G, I]$. The following result that will be used latter, is easy to see.
\begin{proposition}
Let $(R,\mathfrak{m})$ be a commutative Noetherian local ring. Then for any $R$-module $M$ the following are equivalent:
\begin{itemize}
\item[i)] $\p_RM$ is finite.
\item[ii)] $\E_R^i(M,~)=0$ for all integer $i$.
\item[iii)] $\E_R^i(~,M)=0$ for all integer $i$.
\end{itemize}
\end{proposition}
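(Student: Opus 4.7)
The plan is to prove (i) $\Rightarrow$ (ii), (i) $\Rightarrow$ (iii), and then handle both converses simultaneously by specializing the second (resp.\ first) argument to $M$ itself. My strategy is entirely structural: finiteness of $\p_R M$ is encoded in the size of any chosen projective resolution $P_M$, and this boundedness is precisely what the quotient complex $(\widetilde{P_M, P_N})$ detects.

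For (i) $\Rightarrow$ (ii) and (i) $\Rightarrow$ (iii), I would choose a finite projective resolution $P_M$, so that only finitely many components $(P_M)_i$ are nonzero. Then for any $R$-module $N$ with projective resolution $P_N$, every homogeneous map $P_M \to P_N$ has only finitely many nonzero components (since $(P_M)_i = 0$ except for finitely many $i$), and every homogeneous map $P_N \to P_M$ is similarly bounded (since the target $(P_M)_{i+n}$ is nonzero only for finitely many $i$). Hence $(P_M, P_N)_b = (P_M, P_N)$ and $(P_N, P_M)_b = (P_N, P_M)$, so both quotient complexes are zero, and $\E_R^i(M, N) = \E_R^i(N, M) = 0$ for every $i$ and every $N$.

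For the converses, I would observe that (ii) and (iii) each imply $\E_R^0(M, M) = 0$ (take $N = M$ in either statement). The identity $\text{id}_{P_M}$ is a degree-zero element of $(P_M, P_M)$ with $\partial(\text{id}) = 0$, hence descends to a cocycle in $(\widetilde{P_M, P_M})$. Its class being zero yields a degree-$(-1)$ map $h \in (P_M, P_M)$ together with a bounded $b \in (P_M, P_M)_b^{0}$ satisfying $\text{id}_{P_M} = \partial(h) + b$. Since $b$ has only finitely many nonzero components, there exists $N$ such that for every $i > N$ the identity on $(P_M)_i$ agrees with $(\partial h)_i = d_{i+1} h_i + h_{i-1} d_i$ (up to the sign dictated by $\partial(f)(x) = \partial(f(x)) - (-1)^{\deg f} f(\partial x)$). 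This exhibits $h$ as a contracting homotopy in degrees above $N$, and the standard splitting argument then forces $\Omega^{N+1} M$ to be a direct summand of the projective $(P_M)_{N+1}$, hence projective; therefore $\p_R M \leq N+1 < \infty$. The main obstacle is only bookkeeping: one must be careful with sign conventions in the Hom-complex differential to see that a degree-$(-1)$ coboundary of the identity really is a chain contraction, and one must verify that discarding the bounded term $b$ in high degrees is legitimate---which it is, precisely because ``bounded'' means compactly supported along the grading.
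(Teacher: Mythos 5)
The paper itself offers no proof of Proposition~1.1 --- it is dismissed as ``easy to see'' --- so there is no textual argument to compare against. That said, your proof is correct and supplies exactly the substance the paper omits, and the two key observations you isolate are the right ones. For (i)~$\Rightarrow$~(ii), (iii): once $\p_R M<\infty$ you may take a \emph{finite} projective resolution $P_M$; because Vogel cohomology is independent of the chosen resolution, every homogeneous map into or out of $P_M$ has only finitely many nonzero components, so the bounded subcomplex is all of $(P_M,P_N)$ (resp.\ $(P_N,P_M)$) and the quotient complex vanishes identically. For (ii)~$\Rightarrow$~(i) and (iii)~$\Rightarrow$~(i), specializing to $N=M$ and using $\E_R^0(M,M)=0$ to kill the class of $\mathrm{id}_{P_M}$ in the quotient complex is the essential trick: writing $\mathrm{id}_{P_M}=\partial(h)+b$ with $b$ bounded produces a chain contraction of $P_M$ in all sufficiently high degrees, which splits off a projective syzygy and bounds $\p_R M$. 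This is a standard ``stable identity detects finite projective dimension'' argument and it closes the equivalence cleanly.

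Two small points worth tightening, neither of which affects correctness. First, the index in your final splitting is off by one: from $d_{N+2}h_{N+1}+h_N d_{N+1}=\mathrm{id}_{P_{N+1}}$ restricted to $\ker d_{N+1}$ you get that $\ker d_{N+1}=\Omega^{N+2}M$ is a retract of $(P_M)_{N+2}$, not that $\Omega^{N+1}M$ is a summand of $(P_M)_{N+1}$; of course the conclusion $\p_R M<\infty$ is unchanged. Second, you should say explicitly that $\mathrm{id}_{P_M}$ is a cocycle (immediate from $\partial(\mathrm{id})=\partial\circ\mathrm{id}-(-1)^0\,\mathrm{id}\circ\partial=0$) and that its class lies in degree $0$ under the paper's grading conventions for $(\widetilde{P_M,P_M})$, so that $\E_R^0(M,M)=0$ is indeed the relevant group; you acknowledge this as bookkeeping, and it is, but since the differential here \emph{lowers} homological degree by $1$ the translation between chain degree and cohomological superscript deserves one explicit sentence.
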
 
For simplicity, for any $R$-module $M$ and any integer $i$ we let $\z^i(M)$ denote the natural transformation $\z^i(M, k):\e_R^n(M, k)\rightarrow \E_R^n(M,k).$
\begin{definition}
Let $M \neq 0$ be a finitely generated $R$-module. We assign an invariant to $M$, called $\z$-dimension of $M$, denoted $\z\text{-}\dm_RM$, by the formula
$$\z\text{-}\dm_RM= \text{inf}\Bigg\{~n\begin{array}{c|c}  & \z^n(M)~\text{is  epimorphism and}~ \z^i(M)~\text{is isomorphism}\\ &~\text{for all}~i>n\end{array}\Bigg\}.$$
We complement this by setting $\z\text{-}\dm_R0=-\infty$.
\end{definition}
Next theorem is our first main result.
\begin{theorem}
Let $(R,\mathfrak{m}, k)$ be a commutative Noetherian local ring. Let $n\in \mathbb{N}$. Then the following conditions are equivalent:
\begin{itemize}
\item[i)] $\z\text{-}\dm_RM\leq n$, for all finitely generated $R$-module $M$.
\item[ii)] $\z\text{-}\dm_Rk\leq n$.
\item[iii)] $\z^i(k)$ is epimorphism, for all $i\geq n$.
\end{itemize}
\end{theorem}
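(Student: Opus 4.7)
I would prove the cycle (i) $\Rightarrow$ (ii) $\Rightarrow$ (iii) $\Rightarrow$ (i). The first two implications are formal: since $k$ is itself a finitely generated $R$-module, specializing (i) to $M=k$ yields (ii); and (ii) $\Rightarrow$ (iii) because the definition of $\z\text{-}\dm_R k \leq n$ requires $\z^n(k)$ to be an epimorphism and $\z^i(k)$ to be an isomorphism---in particular an epimorphism---for every $i > n$.

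The substantive implication (iii) $\Rightarrow$ (i) rests on two ingredients. The first is the long exact sequence arising from the defining short exact sequence of complexes
$$0 \to (P_M, P_N)_b \to (P_M, P_N) \to (\widetilde{P_M, P_N}) \to 0,$$
which is natural in both arguments. Given any short exact sequence $0 \to A \to B \to C \to 0$ of finitely generated $R$-modules, the corresponding long exact sequences in $\e^*_R(-, k)$ and $\E^*_R(-, k)$ assemble into a commutative ladder joined by $\z^*$, and a standard 4- and 5-lemma argument shows that the class of finitely generated modules $M$ with $\z\text{-}\dm_R M \leq n$ is closed under extensions. The second ingredient is dimension shifting: from the short exact sequences $0 \to \Omega^{s+1}k \to F_s \to \Omega^s k \to 0$ coming from a minimal free resolution of $k$, combined with Proposition 1.1 (which kills $\E^*(F_s, k)$) and the vanishing of $\e^i(F_s, k)$ for $i \geq 1$, one obtains compatible isomorphisms $\e^i(\Omega^s k, k) \cong \e^{i+s}(k, k)$ and $\E^i(\Omega^s k, k) \cong \E^{i+s}(k, k)$ in a suitable range, hence $\z^i(\Omega^s k) \cong \z^{i+s}(k)$.

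Combining these, I would first upgrade (iii) to (ii) by using the dimension-shift isomorphisms together with careful diagram chasing in the long exact sequence of the defining triangle, tracking kernels and images of the $\z^i$ to convert the pure epi hypothesis of (iii) into the required iso condition in degrees $> n$. Then I would deduce (i) from (ii) via the extension-closure property: applied to the short exact sequence $0 \to \mathfrak{m}M \to M \to k^r \to 0$ (where $r$ is the minimal number of generators of $M$) and iterated, combined with a prior reduction from arbitrary finitely generated modules to finite-length modules through the $\mathfrak{m}$-adic filtration, the property propagates from $k$ to every finitely generated $M$.

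The main technical obstacle I anticipate is the reduction from arbitrary finitely generated modules to finite-length ones: while extension-closure handles composition series whose subquotients are copies of $k$, a general finitely generated $M$ is not of finite length, and passing from $M$ to its finite-length approximations $M/\mathfrak{m}^t M$ requires some control on $\E^*(-, k)$ under $\mathfrak{m}$-adic approximation, or alternatively a faithfully flat descent to the completion $\widehat{R}$. This reduction step is where I expect the bulk of the care to be needed.
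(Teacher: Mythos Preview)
Your proposal has the right skeleton but contains a genuine gap in each of the two nontrivial implications.

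\textbf{(iii) $\Rightarrow$ (ii).} The ``careful diagram chasing in the long exact sequence of the defining triangle'' cannot by itself convert surjectivity of the $\z^i(k)$ into injectivity. In that long exact sequence, surjectivity of $\z^i(k)$ for $i\geq n$ forces the connecting map $\E^i(k,k)\to H^{i+1}\big((P_k,P_k)_b\big)$ to vanish, hence $H^{i+1}\big((P_k,P_k)_b\big)\hookrightarrow \e^{i+1}(k,k)$; but to conclude that $\z^{i+1}(k)$ is injective you would need this submodule to be zero, and nothing in your toolkit (syzygy dimension-shifting included) supplies that. The paper instead invokes an external structural result of Martsinkovsky [M2, Theorem~6]: over a \emph{non-regular} local ring the map $\z^i(k)$ is a monomorphism for \emph{every} integer $i$. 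Combined with the epimorphism hypothesis in (iii), this gives the isomorphisms at once. (When $R$ is regular, $\E^*(k,k)=0$ by Proposition~2.1 and the statement is trivial.) This injectivity input is not a formality; it is the key point you are missing.

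\textbf{(ii) $\Rightarrow$ (i).} You correctly flag the obstacle: extension-closure handles finite-length modules via their composition series, but iterating $0\to\mathfrak{m}M\to M\to k^r\to 0$ never terminates, and neither $\mathfrak{m}$-adic approximation nor passage to $\widehat{R}$ interacts with $\E^*(-,k)$ in any way you have established. The paper sidesteps this entirely by induction on the Krull dimension $\dim M$. The base case $\dim M=0$ is exactly your finite-length argument. For $\dim M>0$ one first peels off the finite-length submodule $\Gamma_{\mathfrak{m}}(M)$ via $0\to\Gamma_{\mathfrak{m}}(M)\to M\to M/\Gamma_{\mathfrak{m}}(M)\to 0$, reducing to the case where some $r\in\mathfrak{m}$ is $M$-regular. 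Then $0\to M\xrightarrow{r}M\to M/rM\to 0$ with $\dim(M/rM)<\dim M$, together with the observation that multiplication by $r\in\mathfrak{m}$ is zero on both $\e^*(M,k)$ and $\E^*(M,k)$, lets a Five-Lemma chase finish. This Krull-dimension d\'evissage is the missing idea in your reduction step.
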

\begin{proof}
The implications $(i) \Rightarrow (ii)$ and $(i) \Rightarrow (iii)$ are trivially hold.

$(ii)\Rightarrow (i)$. Let $M$ be a finitely generated $R$-module. We induce on $\dm M$. Suppose first $\dm M = 0$. So $l(M)$, the lengths of $M$ is finite, say $s$. We use induction on s to prove the result in this case. If $s = 1$, there is nothing to prove.
So let $s > 1$, and consider the short exact sequence
$$0\rightarrow k\rightarrow M\rightarrow N\rightarrow 0,$$
where $l(N) = s - 1$. For any integer $i$, there exists a commutative diagram of Rmodules and $R$-homomorphisms
$$
\begin{tikzcd}
\cdots\arrow{r} & \e_R^{i-1}(k, k)\arrow{r}\arrow{d}{\z^{i-1}(k)} & \e_R^i(N, k)\arrow{r}\arrow{d}{\z^{i}(N)} &  \e_R^i(M, k)\arrow{r}\arrow{d}{\z^{i}(M)}& \e_R^i(k, k)\arrow{r}\arrow{d}{\z^{i}(k)} &\cdots\\
\cdots\arrow{r} & \E_R^{i-1}(k, k)\arrow{r}& \E_R^i(N, k)\arrow{r}&  \E_R^i(M, k)\arrow{r}& \E_R^i(k, k)\arrow{r} &\cdots
\end{tikzcd}
$$
By induction assumption, $\z^i(k)$ and $\z^i(N)$ are both epimorphism for $i = n$ and isomorphism for $i > n$. This by a simple diagram chasing, in view of the Five Lemma, will implies that $\z^i(M)$ is epimorphism for $i = n$ and isomorphism for $i > n$. This
completes the proof in this case. Now suppose, inductively, that $\dm M = n > 0$ and the result has been proved for all $R$-modules of dimension less than $n$. Consider the short exact sequence
$$0\rightarrow \Gamma_m(M)\rightarrow M\rightarrow M/\Gamma_m(M)\rightarrow 0$$
of $R$-modules, where $\Gamma_m(M)$denotes the $\mathfrak{m}$-torsion functor $\cup_{n\in\mathbb{N}}0:_{(M)} \mathfrak{m}^n$. This in turn induces, for any integer $i$ a commutative diagram of $R$-modules and $R$-homomorphisms
$$
\begin{tikzcd}
 \e_R^{i-1}(\Gamma_m(M), k)\arrow{r}\arrow{d}{\z^{i-1}(\Gamma_m(M))} & \e_R^i(\frac{M}{\Gamma_m(M)}, k)\arrow{r}\arrow{d}{\z^{i}(M/\Gamma_m(M))} &  \e_R^i(M, k)\arrow{r}\arrow{d}{\z^{i}(M)}& \e_R^i(\Gamma_m(M), k)\arrow{d}{\z^{i}(\Gamma_m(M))} \\
\E_R^{i-1}(\Gamma_m(M), k)\arrow{r}& \E_R^i(\frac{M}{\Gamma_m(M)}, k)\arrow{r}&  \E_R^i(M, k)\arrow{r}& \E_R^i(\Gamma_m(M), k) 
\end{tikzcd}
$$
By inductive assumption, $\z^n(\Gamma_m(M))$ is epimorphism and $\z^i(\Gamma_m(M)$ is isomorphism for all $i > n$. So $\z\text{-}\dm_RM \leq n$ if and only if $\z\text{-}\dm_R(M/\Gamma_m(M)) \leq n$. We can therefore assume, in the inductive step that there exists an element $r \in R$ which
is a non-zero divisor on $M$. The exact sequence
$$0\rightarrow M\xrightarrow{r} M\rightarrow M/rM\rightarrow 0$$
induces for any integer $i$, a commutative diagram of $R$-modules and $R$-homomorphisms
$$
\begin{tikzcd}
 \e_R^{i}(M/rM, k)\arrow{r}\arrow{d}{\z^{i}(M/rM)} & \e_R^i(M, k)\arrow{r}\arrow{d}{\z^{i}(M)} &  \e_R^i(M, k)\arrow{r}\arrow{d}{\z^{i}(M)}& \e_R^{i+1}(M/rM, k)\arrow{d}{\z^{i+1}(M/rM)} \\
\E_R^{i}(M/rM, k)\arrow{r}& \E_R^i(M, k)\arrow{r}&  \E_R^i(M, k)\arrow{r}& \E_R^{i+1}(M/rM, k) 
\end{tikzcd}
$$
Since $\dm M/rM < \dm M$, by inductive assumption, $\z\text{-}\dm_R(M/rM) \leq  n$. So
$\z^n(M/rM)$ is epimorphism and $\z^i(M/rM)$ is isomorphism for all $i > n$. But $r \in \mathfrak{m}$
and so each element of $\E_R^i(M, k)$ is annihilated by multiplication by $r$. This fact in conjunction with the latter diagram, will implies that $\z^n(M)$ is epimorphism and $\z^i(M)$ is isomorphism for all $i > n$.

$(iii)\Rightarrow (ii)$. If $R$ is regular, the result is clear, because $\p_Rk$ is finite and so $\E_R^i(k, k)=0$ for all integer $i$. So let $R$ is non-regular. Then by [M2, Theorem 6],
$\z^i(k)$ is monomorphism for all integer $i$. This follows the result.
\end{proof} 

Following result shows that $\z\text{-}\dm_RM$ is a refinement of $G$-dimension. We preface
it by recalling the structure of Tate cohomology, introduced through complete resolutions, that has been the subject of several recent expositions, in particular by
Buchweitz [B] and Cornick and Kropholler [CK]. LetM be a finite $R$-module of finite
$G$-dimension. Choose a complete resolution $T\xrightarrow{v} P\xrightarrow{\pi} M$ of M (see for instance [AM, Sec.5]). Then for each $R$-module $N$ and for each $n \in \Z$, Tate cohomology group is defined by the equality $$\widehat{\e}_R^n(M, N)=\text{H}^n \text{Hom}_R(T, N).$$

\begin{theorem}
For any finitely generated $R$-module $M$, there is an inequality $$\z\text{-}\dm_RM \leq \text{G}\text{-}\dm_RM$$
with equality, when G-$\dm_RM$ is finite.

\end{theorem}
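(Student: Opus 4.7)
The plan is to prove the result in two parts: the inequality $\z\text{-}\dm_R M \leq G\text{-}\dm_R M$ (vacuously true when the right side is infinite) and the equality when $g := G\text{-}\dm_R M$ is finite. Both parts hinge on the identification of Vogel cohomology with Tate cohomology that is available for modules of finite G-dimension.

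For the inequality, assume $g < \infty$ and choose a complete resolution $T \xrightarrow{v} P \xrightarrow{\pi} M$ with $T$ totally acyclic, $P$ a projective resolution of $M$, and $v_n$ an isomorphism for all $n \geq g$. The first step invokes the standard fact (cf.\ [B], [CK]) that for $M$ of finite G-dimension there is a natural isomorphism $\E_R^*(M, N) \cong \widehat{\e}_R^*(M, N)$ under which $\z^*(M, N)$ corresponds to the comparison map $\e_R^*(M, N) \to \widehat{\e}_R^*(M, N)$ induced by pullback along $v$. Since $v_n$ is an iso for $n \geq g$, the induced map on Hom-cochains is an iso in these degrees. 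A short comparison of kernels and images at degree $g$, using the chain-map identity $d_g^P = v_{g-1} \circ d_g^T$ to see that the image of $\Hom_R(P_{g-1}, N)$ in $\Hom_R(T_g, N)$ is contained in the image of $\Hom_R(T_{g-1}, N)$, will then show the comparison is an isomorphism for $n > g$ and an epimorphism at $n = g$. Specializing $N = k$ yields $\z\text{-}\dm_R M \leq g$.

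For the equality, I would use the Avramov--Martsinkovsky long exact sequence
$$\cdots \to \e_{\mathcal{G}}^i(M, k) \to \e_R^i(M, k) \xrightarrow{\z^i(M)} \E_R^i(M, k) \to \e_{\mathcal{G}}^{i+1}(M, k) \to \cdots,$$
where $\e_{\mathcal{G}}^*$ denotes Gorenstein Ext derived from G-projective resolutions. Setting $m := \z\text{-}\dm_R M$, the conditions that $\z^m(M)$ is epi and $\z^i(M)$ is iso for $i > m$ force, by a direct exactness chase (the map $\widetilde{\text{Ext}}^m \to \e_{\mathcal{G}}^{m+1}$ is zero from the former, and $\e_{\mathcal{G}}^{i+1} \to \e_R^{i+1}$ is zero from the latter), the vanishing $\e_{\mathcal{G}}^i(M, k) = 0$ for every $i \geq m + 1$. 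Since $R$ is local and $M$ finitely generated, a Nakayama-type argument gives $g = \sup\{i : \e_{\mathcal{G}}^i(M, k) \neq 0\}$, so $\e_{\mathcal{G}}^g(M, k) \neq 0$. This forces $g \leq m$, and combined with $m \leq g$ from the first part yields the desired equality.

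The main obstacle is the equality step: it relies on two substantial external inputs---the Avramov--Martsinkovsky long exact sequence connecting ordinary, Tate, and Gorenstein Ext, and the Nakayama-type characterization of G-dim via $\e_{\mathcal{G}}^*(-, k)$---neither of which has been developed earlier in the excerpt. The inequality step, by contrast, is essentially a formal consequence of the Vogel--Tate identification and the structure of the complete resolution; its main subtlety is carefully tracking epimorphism versus isomorphism at the boundary degree $n = g$. An alternative route to the equality, if one prefers to bypass $\e_{\mathcal{G}}^*$, would be to first establish Theorem 1.8 and then sandwich $\z\text{-}\dm_R M$ between $\mathrm{apd}_R M$ and $G\text{-}\dm_R M$, invoking the classical identity $\mathrm{apd}_R M = G\text{-}\dm_R M$ whenever the latter is finite.
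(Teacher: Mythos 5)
Your proposal is correct and follows essentially the same route as the paper: identify Vogel cohomology with Tate cohomology when $G\text{-}\dm_R M$ is finite, use the structure of the complete resolution to get $\z\text{-}\dm_R M\leq g$, and then use the Avramov--Martsinkovsky long exact sequence $\cdots\to\e_{\mathcal G}^i(M,k)\to\e_R^i(M,k)\to\E_R^i(M,k)\to\e_{\mathcal G}^{i+1}(M,k)\to\cdots$ to pin down the equality. The only real divergence is in how the non-vanishing $\e_{\mathcal G}^g(M,k)\neq 0$ is justified: the paper proves it directly by choosing a Gorenstein resolution whose maps are projective covers (forcing $P_g\subset\mathfrak m P_{g-1}$ and hence that $\Hom_R(P_{g-1},k)\to\Hom_R(P_g,k)$ vanishes), whereas you cite it as a known Nakayama-type characterization of $G$-dimension; either supplies the needed fact.
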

\begin{proof}
Without loss of generality, we may assume that G-$\dm_RM = g$ is finite. With this assumption, by [M1, §2], for any integer $i$, there is a natural isomorphism of
cohomology functors $\widehat{\e}_R^i(M, k)\cong \E_R^i(M, k)$, compatible with the maps coming
from $\e_R^i(M, k)$. If $g = 0$, using the definition of Tate cohomology it is easily seen
that $\widehat{\e}_R^0(M, k)\cong \text{Hom}_R(M, k)/N$, for suitable $R$-module $N$. So $\z^0(M)$ is always
epimorphism. Moreover it follows from [AM, 5.2(2)] that $\z^i(M)$ is isomorphism
for all $i > 0$. Hence $\z\text{-}\dm_RM = 0$. Now let $g > 0$. It follows from [AM, 5.2(2)]
that $\z^i(M)$ is isomorphism for all $i > g$ and follows from [AM, 7.1] that $\z^g(M)$
is epimorphism. So $\z\text{-}\dm_RM \leq \text{G-}\dm_RM$. For equality, consider a Gorenstein
resolution of $M$, say
$$0\rightarrow P_{g}\rightarrow P_{g-1}\rightarrow\cdots\rightarrow P_{1}\rightarrow G_{0}\rightarrow M\rightarrow 0$$
with all the $P_i$'s finitely generated and projective and with $G_0$ of Gorenstein dimension
zero. But then we can also assume that each $P_i \rightarrow P_{i-1}$ gives a projective
cover of the image of $P_i$ in $P_{i-1}$. So in particular this means that $P_g \subset \mathfrak{m}P_{g-1}$. But
this implies that $\text{Hom}_R(P_{g-1}, k) \rightarrow \text{Hom}_R(P_g, k)$ is the zero map. So if $P_g \neq 0$ we see that $\e_\mathcal{G}^g(M, k)\neq 0$. Here we are tacitly assuming $g \geq 2$. If $g = 1$, then the
resolution of $M$ looks like $0\rightarrow F\rightarrow G\rightarrow M\rightarrow 0$. Here $F$ is a finitely generated free $R$-module. We can assume that $F \subset \mathfrak{m}G$, for if not one can use Nakayama's
lemma to get a copy of $R$ in $F$ a direct summand of $G$. But then we can go modulo this copy of $R$. So repeating if necessary, we finally get that $F \subset \mathfrak{m}G$. But then if $F \neq 0$, by the same type argument as above we get that $\e_{\mathcal{G}}^1(M, k)\neq 0$. Hence
by [AM, 7.1], either $\z^g(M)$ is not injective or $\z\text{-}\dm ^{g-1}(M)$ is not epimorphism. So
$\z\text{-}\dm_RM \geq g$. This completes the proof.

\end{proof}

Now we are in position to put all our results together to present a characterization
for Gorenstein rings in terms of $\z$-dimension. We need the following proposition.
\begin{proposition}
Let $M$ be a finitely generated $R$-module of finite $\z$-dimension,
say $n$. Then for any finitely generated $R$-module $N$, $\z^n(M,N)$ is epimorphism and
$\z^i(M,N)$ is isomorphism for all $i > n$.

\end{proposition}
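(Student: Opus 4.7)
The plan is to prove the Proposition by induction on $\dm N$, in close parallel with the proof of Theorem 1.3 but carried out in the second variable. The essential preliminary fact is that $\z^{\ast}(M, -)$ is a natural transformation of cohomological functors in the second argument: given a short exact sequence $0 \to N_1 \to N_2 \to N_3 \to 0$ of finitely generated $R$-modules, compatible projective resolutions produce short exact sequences of the Hom complexes $(P_M, P_{N_j})$ and of their bounded subcomplexes $(P_M, P_{N_j})_b$ (because each term of $P_M$ is projective), and hence also of the quotients $(\widetilde{P_M, P_{N_j}})$. Passing to cohomology yields a commutative ladder of long exact sequences linked by the $\z^i(M, N_j)$, and a Five Lemma diagram chase (of the same shape as in the proof of Theorem 1.3) shows that the class $\C$ of finitely generated $N$ with $\z^n(M, N)$ an epimorphism and $\z^i(M, N)$ an isomorphism for all $i > n$ is closed under extensions.

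The induction on $\dm N$ then mirrors Theorem 1.3. For $\dm N = 0$, an inner induction on $\ell(N)$ using a short exact sequence $0 \to k \to N \to N' \to 0$ with $\ell(N') = \ell(N) - 1$ reduces the base case to the hypothesis that $k \in \C$. For $\dm N > 0$, the torsion sequence $0 \to \Gamma_\mathfrak{m}(N) \to N \to N/\Gamma_\mathfrak{m}(N) \to 0$ combined with extension-closure and the base case reduces to the situation $\Gamma_\mathfrak{m}(N) = 0$, so there exists an $N$-regular element $r \in \mathfrak{m}$. The sequence $0 \to N \xrightarrow{r} N \to N/rN \to 0$ satisfies $\dm N/rN < \dm N$, so $N/rN \in \C$ by the outer inductive hypothesis.

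The main obstacle is deducing $N \in \C$ from $N/rN \in \C$: since $N$ appears twice in the sequence, extension-closure does not apply directly, and unlike Theorem 1.3 one cannot invoke that $r$ annihilates the coefficient module (there $N$ was $k$, and $r \in \mathfrak{m}$ killed it). I would instead split the long exact sequence associated to $0 \to N \xrightarrow{r} N \to N/rN \to 0$ into the short exact sequences
$$0 \to \e_R^{i-1}(M, N)/r\, \e_R^{i-1}(M, N) \to \e_R^i(M, N/rN) \to (0:_{\e_R^i(M, N)} r) \to 0$$
together with its analogue for $\E$, and apply the snake lemma using that the middle vertical map $\z^i(M, N/rN)$ is an isomorphism for $i > n$. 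This yields injectivity of $\z^{i-1}(M, N)$ modulo $r$ and surjectivity of $\z^i(M, N)$ on $r$-torsion. Iterating the same analysis with the powers $r^k$ (all of which remain $N$-regular) and applying Krull's intersection theorem to the finitely generated module $\e_R^i(M, N)$ should upgrade these partial isomorphisms into the full conclusion, namely $\z^i(M, N)$ an isomorphism for $i > n$ and an epimorphism at $i = n$. This $r$-adic endgame, rather than the diagram chase itself, is the technical core of the proof.
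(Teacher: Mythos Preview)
Your inductive framework---induction on $\dm N$, the length induction for $\dm N = 0$, the reduction via $\Gamma_{\mathfrak{m}}(N)$, and the passage to $0 \to N \xrightarrow{r} N \to N/rN \to 0$---matches the paper's proof exactly. The divergence is entirely in the endgame, and your version is both more elaborate than necessary and incomplete on the surjectivity side.

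The paper does not iterate with powers of $r$ or invoke Krull's intersection theorem. Instead, a single diagram chase on the ladder for $0 \to N \xrightarrow{r} N \to N/rN \to 0$ (using that $\z^i(M,N/rN)$ is an epimorphism and $\z^{i+1}(M,N/rN)$ an isomorphism for $i\geq n$) shows that multiplication by $r$ is surjective on $\mathrm{Ker}\,\z^{i+1}(M,N)$; since this kernel sits inside the finitely generated module $\e_R^{i+1}(M,N)$, Nakayama's Lemma kills it immediately. With injectivity of $\z^{i+1}(M,N)$ in hand, a second chase shows $r$ acts surjectively on $\mathrm{Coker}\,\z^i(M,N)$, and Nakayama again gives $\mathrm{Coker}=0$. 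Your route via Krull on $\e_R^i(M,N)$ does recover the injectivity half, but the surjectivity half does not follow from what you wrote: knowing that $\z^i(M,N)$ is surjective on $r^k$-torsion for every $k$ does not force surjectivity on all of $\E_R^i(M,N)$, and Krull applied to $\e$ says nothing about the cokernel, which is a quotient of $\E$. What closes the gap is precisely the paper's Nakayama step on the cokernel, not an $r$-adic limit on the domain side.
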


\begin{proof}
Let $N$ be a finitely generated $R$-module. By following the same type argument as we have used for the proof of Theorem 1.3, we may assume inductively that $\dm N > 0$, the result holds for all finitely generated modules of dimension less
than $\dm N$ and also there exists a non-zerodivisor $r$ on $N$. So we have a short exact
sequence $0\rightarrow N\xrightarrow{r}N\rightarrow N/rN\rightarrow 0$. This induces, for any integer $i$, a commutative
diagram of $R$-modules and $R$-homomorphisms

$$
\begin{tikzcd}
 \e_R^{i}(M, N/rN)\arrow{r}\arrow{d}{\z^{i}(M,N/rN)} & \e_R^{i+1}(M, N)\arrow{r}\arrow{d}{\z^{i+1}(M, N)} &  \e_R^{i+1}(M, N)\arrow{r}\arrow{d}{\z^{i+1}(M, N)}& \e_R^{i+1}(M, N/rN)\arrow{d}{\z^{i+1}(M, N/rN)} \\
\E_R^{i}(M, N/rN)\arrow{r}& \E_R^{i+1}(M, N)\arrow{r}&  \E_R^{i+1}(M, N)\arrow{r}& \E_R^{i+1}(M, N/rN) 
\end{tikzcd}
$$

Since for any $i \geq n$, $\z^i(M,N/rN)$ is epimorphism and $\z^{i+1}(M,N/rN)$ is isomorphism,
by a diagram chasing one can see that the multiplication map by $r$ restricted to $\text{Ker}  \z^{i+1}(M,N)$ is epimorphism. So using Nakayama's Lemma, for any $i \geq n$,
we get that the map $\z^{i+1}(M,N)$ is monomorphism. Now consider the commutative
diagram

$$
\begin{tikzcd}
 \e_R^{i}(M, N)\arrow{r}{r}\arrow{d}{\z^{i}(M,N)} & \e_R^{i}(M, N)\arrow{r}\arrow{d}{\z^{i}(M, N)} &  \e_R^{i}(M, N/rN)\arrow{r}\arrow{d}{\z^{i}(M, N/rN)}& \e_R^{i+1}(M, N)\arrow{d}{\z^{i+1}(M, N)} \\
\E_R^{i}(M, N)\arrow{r}{r}& \E_R^{i}(M, N)\arrow{r}&  \E_R^{i}(M, N/rN)\arrow{r}& \E_R^{i+1}(M, N) 
\end{tikzcd}
$$
Since for any integer $i \geq n$, $\z^i(M,N/rN)$ is epimorphism and $\z^{i+1}(M,N)$ is
monomorphism, the restriction of the multiplication map $r$ to $\text{Coker}~ \z^i(M,N)$ is
surjective, and so by Nakayama's Lemma, $\text{Coker}~ \z^i(M,N) = 0$. Therefore $\z^i(M,N)$
is epimorphism for all $i \geq n$. This completes the proof.
\end{proof}
\begin{theorem}
The following conditions are equivalent:
\begin{itemize}
\item[i)] $R$ is Gorenstein.
\item[ii)] $\z\text{-}\dm_RM < \infty$, for all finitely generated $R$-module $M$.
\item[iii)]  $\z\text{-}\dm_R k < \infty$.
\end{itemize}

\end{theorem}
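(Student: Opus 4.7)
The plan is to prove the chain $(i)\Rightarrow(ii)\Rightarrow(iii)\Rightarrow(i)$, where the first two implications are essentially formal and the real work is in showing that finiteness of $\zeta\text{-}\dm_R k$ forces $R$ to be Gorenstein.

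For $(i)\Rightarrow(ii)$, I would invoke the classical theorem of Auslander and Bridger that over a Gorenstein local ring every finitely generated module has finite G-dimension, and combine this with Theorem 1.4 ($\zeta\text{-}\dm_R M \leq \text{G-}\dm_R M$) to conclude $\zeta\text{-}\dm_R M <\infty$. The implication $(ii)\Rightarrow(iii)$ is immediate by specializing $M = k$.

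The substantive step is $(iii)\Rightarrow(i)$. Assume $\zeta\text{-}\dm_R k = n < \infty$. Proposition 1.5 applied with $M=k$ and $N=R$ gives that $\z^i(k,R):\e_R^i(k,R)\to \E_R^i(k,R)$ is an isomorphism for every $i > n$. Now $R$ is projective over itself, so $\p_R R = 0 < \infty$, and therefore Proposition 1.1(iii) yields $\E_R^i(~,R)=0$ for every integer $i$; in particular $\E_R^i(k,R)=0$. Combining these two facts, $\e_R^i(k,R)=0$ for all $i > n$. By Bass's characterization of rings of finite self-injective dimension, this forces $\text{injdim}_R R \leq n$, whence $R$ is Gorenstein.

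Because the only nontrivial step reduces to combining Proposition 1.5 (which transports the finiteness property to arbitrary second argument) with the vanishing half of Proposition 1.1 (applied to the projective module $R$), no obstacle of substance remains; the only pitfall is remembering to invoke the Bass criterion to convert vanishing of $\e_R^i(k,R)$ in high degree into finiteness of the self-injective dimension of $R$, and to cite Auslander--Bridger for the implication from Gorensteinness of $R$ to finite G-dimension for every finitely generated module in the opening step.
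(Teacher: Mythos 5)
Your argument follows the paper's own proof step by step: $(i)\Rightarrow(ii)$ via Auslander--Bridger plus Theorem 1.4, $(ii)\Rightarrow(iii)$ trivially, and $(iii)\Rightarrow(i)$ by applying Proposition 1.5 with $N=R$, using Proposition 1.1 to kill $\E_R^i(k,R)$, and concluding from the eventual vanishing of $\e_R^i(k,R)$ that $R$ is Gorenstein. The only difference is that you make explicit the appeal to Bass's criterion, which the paper leaves implicit; this is a welcome clarification but not a departure in method.
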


\begin{proof}
 $(i) \Rightarrow (ii)$. Since $R$ is Gorenstein, by [AB], G-$\dm_RM < \infty$, for all finitely
generated $R$-module $M$. So by Theorem 1.4, $\z\text{-}\dm_RM < \infty$.

$(ii) \Rightarrow (iii)$. This trivially holds.

$(iii) \Rightarrow (i)$. Let $\z\text{-}\dm_R k < \infty$. By Proposition 1.5, $\z^i(k,R)$ is isomorphism for all
integer $i > \z\text{-}\dm_R k$. But $\E_R^i(k, R)=0$ for all $i$. This implies that $\e_R^i(k, R)=0$
for all $i$ large enough. So $R$ is Gorenstein.

\end{proof}

\begin{proposition}
Let $\mathfrak{p}$ be a prime ideal in $\textnormal{Spec}(R)$. Then for any finitely generated $R$-module $M$,
$$\z\text{-}\dm_{R_{\mathfrak{p}}}M_{\mathfrak{p}} \leq \z\text{-}\dm_RM.$$

\end{proposition}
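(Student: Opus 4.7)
If $\z\text{-}\dm_R M = \infty$ there is nothing to prove, so I would assume $\z\text{-}\dm_R M = n < \infty$. The plan is to apply Proposition 1.5 with the test module $N = R/\mathfrak{p}$, whose localization at $\mathfrak{p}$ is exactly the residue field $\kappa(\mathfrak{p}) = R_\mathfrak{p}/\mathfrak{p}R_\mathfrak{p}$ of $R_\mathfrak{p}$, and then transport the conclusion through the localization functor.

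First I would set up the comparison between Vogel cohomology over $R$ and over $R_\mathfrak{p}$. Since $R$ is Noetherian and $M, R/\mathfrak{p}$ are finitely generated, they admit projective resolutions $P_M, P_N$ by finitely generated projective $R$-modules; their localizations $(P_M)_\mathfrak{p}, (P_N)_\mathfrak{p}$ are finitely generated projective resolutions over $R_\mathfrak{p}$ of $M_\mathfrak{p}$ and $\kappa(\mathfrak{p})$ respectively. Using that $\Hom_R(P, Q)_\mathfrak{p} \cong \Hom_{R_\mathfrak{p}}(P_\mathfrak{p}, Q_\mathfrak{p})$ when $P$ is finitely presented, one checks that the graded Hom complex, its bounded subcomplex and the Vogel quotient complex all have compatible descriptions over $R$ and over $R_\mathfrak{p}$ after localization. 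Taking cohomology and invoking naturality of $\z^*$ produces, for each $i$, a commutative square
\[
\begin{tikzcd}
\e_R^i(M, R/\mathfrak{p})_\mathfrak{p} \arrow{r}{\cong} \arrow{d}{\z^i(M, R/\mathfrak{p})_\mathfrak{p}} & \e_{R_\mathfrak{p}}^i(M_\mathfrak{p}, \kappa(\mathfrak{p})) \arrow{d}{\z^i_{R_\mathfrak{p}}(M_\mathfrak{p})} \\
\E_R^i(M, R/\mathfrak{p})_\mathfrak{p} \arrow{r}{\cong} & \E_{R_\mathfrak{p}}^i(M_\mathfrak{p}, \kappa(\mathfrak{p}))
\end{tikzcd}
\]
in which the horizontal arrows are isomorphisms.

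Now I would apply Proposition 1.5 to $M$ over $R$ with test module $N = R/\mathfrak{p}$: the map $\z^n(M, R/\mathfrak{p})$ is an epimorphism and $\z^i(M, R/\mathfrak{p})$ is an isomorphism for every $i > n$. Localization at $\mathfrak{p}$ is exact and so preserves epimorphisms and isomorphisms, hence the left vertical arrow in the square above is epi for $i = n$ and iso for $i > n$. By commutativity, the same is true of the right vertical arrow $\z^i_{R_\mathfrak{p}}(M_\mathfrak{p})$, which by the definition of $\z$-dimension gives $\z\text{-}\dm_{R_\mathfrak{p}}(M_\mathfrak{p}) \leq n$.

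The main obstacle is the first step: verifying that Vogel cohomology commutes with localization at $\mathfrak{p}$. Ordinary Ext behaves well because localization commutes with finite direct sums and with Hom out of finitely presented modules, but the Vogel construction also involves the infinite product $\prod_i \Hom_R((P_M)_i, (P_N)_{i+n})$, and localization does not commute with arbitrary products in general. One therefore has to argue directly at the level of the quotient complex $(P_M, P_N)/(P_M, P_N)_b$, using the finite generation of the $(P_M)_i$ over the Noetherian ring $R$ to identify the localized Vogel complex with the Vogel complex computed over $R_\mathfrak{p}$ from the localized resolutions.
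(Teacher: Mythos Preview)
Your argument is correct and is, in spirit, exactly what the paper has in mind: the paper's entire proof is the single sentence ``just note that both functors $\e_R^*$ and $\E_R^*$ are well behaved under localization.'' Your write-up is considerably more careful than this, and in two respects it actually does more than the paper.

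First, the paper does not say how to pass from the test module $k$ (the residue field of $R$) to the test module $\kappa(\mathfrak{p})$ (the residue field of $R_\mathfrak{p}$); simply localizing $\z^i(M,k)$ at $\mathfrak{p}$ gives nothing, since $k_\mathfrak{p}=0$ for $\mathfrak{p}\neq\mathfrak{m}$. Your device of invoking Proposition~2.5 with $N=R/\mathfrak{p}$ and then localizing is precisely the step that makes the one-line proof into an honest argument.

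Second, you are right to flag the infinite-product issue: the degree-$n$ piece of the Vogel complex is a quotient $\prod_i\Hom_R((P_M)_i,(P_N)_{i+n})\big/\bigoplus_i(\cdots)$, and localization does not commute with such products in general, so the identification $\E_R^*(M,N)_\mathfrak{p}\cong\E_{R_\mathfrak{p}}^*(M_\mathfrak{p},N_\mathfrak{p})$ genuinely requires justification. The paper simply asserts this. Your last paragraph does not fully resolve it either, but you have correctly located where the work lies; to close the gap one must argue at the level of the quotient complex (not the product itself), using that each $\Hom_R((P_M)_i,(P_N)_{i+n})$ is finitely generated over the Noetherian ring $R$.
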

\begin{proof}
For proof just one should note that both functors $\e_R^*$ and $\E_R^*$ are well
behaved under localization. 
\end{proof}

Now we aim to give a lower bound for $\z\text{-}\dm_RM$. There is a refinement of projective
dimension of $M$ denoted by $\ap_RM$, defined by the formula

$$\ap_RM=\text{sup} \Bigg\{ i\in \mathbb{N}_0 \begin{array}{c|c}  & \e_R^i(M,T)\neq 0~\text{for some finitely generated}\\ &~R\text{-}\text{module}~T~\text{with}~\p_RT<\infty\end{array}\Bigg\}.$$
It is proved in [AB, Theorem 4.13] that $\ap_R$ is also a refinement of G-dimension
G-$\dm_R$ . We shall show that $\ap_RM \leq \z\text{-}\dm_RM$, with equality where $\z\text{-}\dm_RM$
is finite.
\begin{theorem}
Let $M$ be a finitely generated $R$-module. Then
$$\ap_RM \leq \z\text{-}\dm_RM$$
with equality if $\z\text{-}\dm_RM$ is finite.

\end{theorem}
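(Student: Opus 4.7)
The plan is to establish the two parts separately, both routed through a single syzygy shift against $k$. Put $n = \z\text{-}\dm_R M$. For the upper bound, I would let $T$ be a finitely generated $R$-module with $\p_R T < \infty$; by Proposition 1.1, $\E_R^i(M, T) = 0$ for every $i$, and by Proposition 1.5, $\z^i(M, T)$ is an isomorphism for every $i > n$, so $\e_R^i(M, T) = 0$ for all $i > n$, whence $\ap_R M \leq n$.

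For the equality when $n$ is finite, I plan to argue by contradiction, assuming $\ap_R M \leq n - 1$ so that $\e_R^i(M, R^s) = 0$ for all $i \geq n$ and all $s$. Using the short exact sequence $0 \to \Omega k \to P_0 \to k \to 0$ coming from the first step of a projective resolution of $k$, with $P_0$ finitely generated free, the long exact sequence of $\e_R^\ast(M, -)$ collapses (the contradictory hypothesis kills $\e_R^i(M, P_0)$ for $i \geq n$) to give a surjection $\phi_1 \colon \e_R^{n-1}(M, k) \twoheadrightarrow \e_R^n(M, \Omega k)$ and an isomorphism $\phi_2 \colon \e_R^n(M, k) \xrightarrow{\cong} \e_R^{n+1}(M, \Omega k)$. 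In parallel, $\E_R^\ast(M, P_0) = 0$ by Proposition 1.1, so the long exact sequence of $\E_R^\ast(M, -)$ yields isomorphisms $\psi_1, \psi_2$ in the corresponding positions. Naturality of $\z^\ast$ with respect to connecting homomorphisms then gives $\psi_j \circ \z^{n-1+j}(M, k) = \z^{n+j}(M, \Omega k) \circ \phi_j$ for $j = 1, 2$. Proposition 1.5 applied to the finitely generated module $\Omega k$ ensures that $\z^n(M, \Omega k)$ is an epimorphism and $\z^{n+1}(M, \Omega k)$ is an isomorphism; tracing through the two identities, $\z^{n-1}(M, k)$ becomes a composition of epimorphisms and an isomorphism, hence an epimorphism, while $\z^n(M, k)$ becomes a composition of isomorphisms, hence itself an isomorphism. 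Combined with the isomorphy of $\z^i(M, k)$ for $i > n$ from Proposition 1.5, this verifies the defining conditions for $\z\text{-}\dm_R M \leq n - 1$, contradicting $\z\text{-}\dm_R M = n$.

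The main obstacle is the limited strength of Proposition 1.5 at the threshold degree, where it provides only an epimorphism rather than an isomorphism; one cannot directly propagate isomorphy from degree $n$ to degree $n - 1$ for $k$. The syzygy shift is precisely what converts the threshold-level epimorphism for $\Omega k$ into the epimorphism at degree $n - 1$ for $k$ that is needed to lower $\z\text{-}\dm_R M$ past $n$, and this trade is available only because the $\e$-side long exact sequence collapses, which is exactly what the hypothesis $\ap_R M \leq n - 1$ secures.
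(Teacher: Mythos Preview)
Your proof is correct and follows essentially the same route as the paper. Both arguments handle the inequality via Propositions~1.1 and~1.5, and for the equality both argue by contradiction using the first syzygy sequence of $k$: the paper writes it as $0\to\mathfrak{m}\to R\to k\to 0$, which is exactly your $0\to\Omega k\to P_0\to k\to 0$ with $P_0=R$, and then invokes Proposition~1.5 for $N=\mathfrak{m}$ (your $\Omega k$) together with the vanishing of $\e_R^i(M,R)$ for $i\geq n$ to push the $\z$-dimension below $n$.
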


\begin{proof}
We may (and do) assume that $\z\text{-}\dm_RM = n$ is finite. So by Proposition 1.5,
for any finitely generated $R$-module $N$, $\z^n(M,N)$ is epimorphism and $\z^i(M,N)$ is
isomorphism for all $i > n$. Now let $N$ be an $R$-module of finite projective dimension.
By Proposition 1.1, $\E_R^i(M, N)=0$ for all integer $i$. So we get $\e_R^i(M, N)=0$
for all $i >n$. This implies that $\ap_RM \leq \z\text{-}\dm_RM$. Now let $\ap_R(M) = s$ and
$s < n$. We seek for a contradiction. The short exact sequence $0\rightarrow \mathfrak{m}\rightarrow R\rightarrow k\rightarrow 0$
induces the following commutative diagram

$$
\begin{tikzcd}
 \e_R^{n-1}(M, k)\arrow{r}\arrow{d}{\z^{n-1}(M,k)} & \e_R^{n}(M, \mathfrak{m})\arrow{r}\arrow{d}{\z^{n}(M, \mathfrak{m})} &  \e_R^{n}(M, k)\arrow{r}\arrow{d}{\z^{n}(M, R)}& \e_R^{n}(M, k)\arrow{r}\arrow{d}{\z^{n}(M, k)}& \e_R^{n+1}(M, \mathfrak{m})\arrow{d}{\z^{n+1}(M, \mathfrak{m})} \\
\E_R^{n-1}(M, k)\arrow{r}& \E_R^{n}(M, \mathfrak{m})\arrow{r}&  \E_R^{n}(M, R)\arrow{r}& \E_R^{n}(M, k)\arrow{r}& \E_R^{n+1}(M, \mathfrak{m})
\end{tikzcd}
$$
Since $s < n$, $\e_R^i(M, R)=0$ for all $i \geq n$. So since $\z^{n+1}(M,\mathfrak{m})$ is isomorphism, we
get $\z^n(M, k)$ is isomorphism and since $\z^n(M,\mathfrak{m})$ is epimorphism, we get $\z^{n-1}(M, k)$
is epimorphism. Therefore $\z\text{-}\dm_RM \leq n - 1$. This is the desired contradiction. So $s = n$.

\end{proof}

\begin{corollary}
For any finitely generated $R$-module $M$,
$$\ap_R(M)\leq \z\text{-}\dm_RM\leq G\text{-}\dm_RM\leq CI\text{-}\dm_RM\leq \p_RM,$$
with equality to the left of any finite ones.

\end{corollary}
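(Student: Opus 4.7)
The plan is to deduce this corollary by simply stitching together the three pieces already at hand: Theorem 1.8 for the leftmost inequality, Theorem 1.4 for the middle inequality, and the classical hierarchy $\text{G-}\dm_R M \leq \text{CI-}\dm_R M \leq \p_R M$ (with equality to the left of finite ones) established by Avramov--Gasharov--Peeva in [AGP] and by Auslander--Bridger in [AB] for the two rightmost inequalities. There is no new ingredient to introduce; the work is purely assembly.

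Concretely, I would first reproduce the inequalities in order. Theorem 1.8 delivers $\ap_R M \leq \z\text{-}\dm_R M$, with equality whenever $\z\text{-}\dm_R M$ is finite. Theorem 1.4 delivers $\z\text{-}\dm_R M \leq \text{G-}\dm_R M$, with equality whenever $\text{G-}\dm_R M$ is finite. The remaining inequalities $\text{G-}\dm_R M \leq \text{CI-}\dm_R M \leq \p_R M$ are quoted from [AGP, Theorem 1.4] (or the summary in [AGP, §1]), together with the fact, also established there, that $\text{CI-}\dm_R M = \p_R M$ whenever $\p_R M$ is finite, and $\text{G-}\dm_R M = \text{CI-}\dm_R M$ whenever $\text{CI-}\dm_R M$ is finite.

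For the "equality to the left of any finite ones" clause, I would argue by a single downward propagation: suppose some term in the chain is finite; then each of the equalities listed above kicks in at that position, forcing the next invariant to its left to be equal and, in particular, to also be finite; iterating this propagates equality all the way to $\ap_R M$. For instance, if $\text{CI-}\dm_R M < \infty$, then $\text{G-}\dm_R M = \text{CI-}\dm_R M$ by [AGP], hence $\text{G-}\dm_R M$ is finite, so Theorem 1.4 gives $\z\text{-}\dm_R M = \text{G-}\dm_R M$, which is finite, and Theorem 1.8 then gives $\ap_R M = \z\text{-}\dm_R M$. The cases of finite $\p_R M$, finite $\text{G-}\dm_R M$, or finite $\z\text{-}\dm_R M$ are analogous, each triggering all the equalities to its left.

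I do not expect any real obstacle: all the hard work sits inside Theorems 1.4 and 1.8 and the cited results of [AGP] and [AB]. The only point to write carefully is the logical structure of the "equality to the left of finite ones" statement, to make clear that finiteness is transmitted leftwards along with equality, so that no invariant can fail to match its right neighbour once a finite value appears somewhere to its right in the chain.
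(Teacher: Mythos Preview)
Your proposal is correct and matches the paper's approach exactly: the corollary is stated without proof in the paper, as it is an immediate consequence of Theorem~1.4, Theorem~1.8, and the classical hierarchy from [AGP] and [AB], precisely as you describe. Your explicit verification of the ``equality to the left of any finite ones'' clause by downward propagation of finiteness is more detailed than anything the paper writes, but entirely in line with the intended argument.
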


Finally we show that the category of modules of finite $\z$-dimension has resolving
property. Let $\mathcal{Z}$ (resp. $\mathcal{\tilde{Z}}$) denotes the full subcategory of $\mathcal{F}$, the category of
finitely generated $R$-modules and $R$-homomorphisms, whose objects are modules of
$\z$-dimension zero (resp. of finite $\z$-dimension).

\begin{proposition}
The category $\mathcal{Z}$ is closed under extension and kernels of epimorphisms.
Moreover $\mathcal{Z}$ contains $\mathcal{P}$, the category of finite projective $R$-modules.

\end{proposition}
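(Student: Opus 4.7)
The plan is to verify each of the three properties directly from the definition of $\z$-dimension, together with the existence and naturality of the transformation $\z^*$.

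First I would dispatch containment $\mathcal{P}\subseteq\mathcal{Z}$ using Proposition~1.1: if $P$ is a finitely generated projective $R$-module, then $\p_R P$ is finite, so $\E_R^i(P,k)=0$ for every $i\in\Z$; combined with $\e_R^i(P,k)=0$ for $i>0$, this makes $\z^i(P)$ an isomorphism of zero groups for every $i>0$, while $\z^0(P)\colon\Hom_R(P,k)\to 0$ is trivially surjective. Hence $\z\text{-}\dm_R P\leq 0$.

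Next, for closure under extensions, I would take $0\to A\to B\to C\to 0$ with $A,C\in\mathcal{Z}$ and stack the long exact sequences of $\e_R^*(-,k)$ and $\E_R^*(-,k)$ into a commutative ladder linked by $\z^*$. For each $i\geq 1$ I would look at the five consecutive vertical maps $\z^{i-1}(A), \z^i(C), \z^i(B), \z^i(A), \z^{i+1}(C)$: the three maps $\z^i(C)$, $\z^i(A)$, $\z^{i+1}(C)$ are isomorphisms, and $\z^{i-1}(A)$ is either an isomorphism (when $i\geq 2$) or an epimorphism (when $i=1$), so the sharp Five Lemma forces $\z^i(B)$ to be an isomorphism. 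At $i=0$ the Five Lemma is unavailable, so I would run a direct diagram chase: given $\tilde{x}\in\E_R^0(B,k)$, push it to $\E_R^0(A,k)$, lift via $\z^0(A)$ epi to $y\in\e_R^0(A,k)$, check via $\z^1(C)$ monic that $y$ pulls back to $w\in\e_R^0(B,k)$, and correct the residual discrepancy, which lies in the image of $\E_R^0(C,k)$, by an element lifted through $\z^0(C)$ epi. This yields $\z\text{-}\dm_R B\leq 0$.

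For closure under kernels of epimorphisms, I would apply the same ladder to $0\to K\to B\to C\to 0$ with $B,C\in\mathcal{Z}$. For $i\geq 1$ the five consecutive vertical maps surrounding $\z^i(K)$ are $\z^i(C), \z^i(B), \z^i(K), \z^{i+1}(C), \z^{i+1}(B)$, and all four of the outer maps are isomorphisms, so the Five Lemma gives $\z^i(K)$ iso. For $i=0$, given $\tilde{x}\in\E_R^0(K,k)$, I would push it along the connecting map to $\E_R^1(C,k)$, lift through $\z^1(C)$ iso to $y\in\e_R^1(C,k)$, observe that $y$ dies in $\e_R^1(B,k)$ by injectivity of $\z^1(B)$ combined with exactness, pull it back to $w\in\e_R^0(K,k)$, and then use $\z^0(B)$ epi to lift the residual difference $\tilde{x}-\z^0(K)(w)$ to an element of $\e_R^0(B,k)$ whose image in $\e_R^0(K,k)$ completes the preimage. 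This shows $\z^0(K)$ is epi.

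The main obstacle in both parts is the $i=0$ step: because only surjectivity (not bijectivity) of $\z^0$ is available on the hypothesis modules, the Five Lemma cannot be invoked verbatim, and one must thread the partial information at degree zero through the higher-degree isomorphisms carefully so that exactness is preserved at each step of the chase.
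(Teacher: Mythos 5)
Your proof is correct and takes essentially the same route as the paper: a ladder of long exact sequences in ordinary and Vogel cohomology linked by $\z^*$, with the Five Lemma handling degrees $i\geq 1$ and a direct diagram chase (in effect, the surjectivity form of the Four Lemma, since $\z^0(C),\z^0(A)$ epi and $\z^1(C)$ mono already give $\z^0(B)$ epi) handling degree $0$. The only cosmetic difference is that the paper phrases extensions and kernels of epimorphisms as a single ``$L\in\mathcal{Z}$ if and only if $E\in\mathcal{Z}$'' statement for one sequence $0\to E\to L\to M\to 0$ with $M\in\mathcal{Z}$, whereas you treat the two directions as separate sequences; the diagram and the lemma used are the same.
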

\begin{proof}
Let $0\rightarrow E\rightarrow L\rightarrow M\rightarrow 0$ be an exact sequence of $R$-modules with
$M \in \mathcal{Z}$. We shall show that $L \in \mathcal{Z}$ if and only if $E \in \mathcal{Z}$. The above short exact
sequence induces for any integer $i$, a commutative diagram of $R$-modules and $R$-homomorphisms
$$
\begin{tikzcd}
 \e_R^{i}(M, k)\arrow{r}\arrow{d}{\z^{i}(M)} & \e_R^{i}(L, k)\arrow{r}\arrow{d}{\z^{i}(L)} &  \e_R^{i}(E, k)\arrow{r}\arrow{d}{\z^{i}(E)}& \e_R^{i+1}(M, k)\arrow{d}{\z^{i+1}(M)} \\
\E_R^{i}(M, k)\arrow{r}& \E_R^{i}(L, k)\arrow{r}&  \E_R^{i}(E, k)\arrow{r}& \E_R^{i+1}(M, k) 
\end{tikzcd}
$$
It is now easy to deduce the first assertion, by a simple diagram chasing and also
five lemma. The last assertion is elementary.
\end{proof}
\begin{proposition}

The category $\mathcal{\tilde{Z}}$ is closed under extension, kernel of epimorphisms
and cokernel of monomorphisms. Moreover $\mathcal{\tilde{Z}} \supseteq \mathcal{\tilde{G}}$, where $\mathcal{\tilde{G}}$ denotes the subcategory
of $\mathcal{F}$ whose objects are finitely generated $R$-modules of finite $G$-dimension.

\end{proposition}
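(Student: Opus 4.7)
For the inclusion $\mathcal{\tilde{Z}} \supseteq \mathcal{\tilde{G}}$, this is immediate from Theorem 1.4: if $G\text{-}\dm_R M$ is finite then $\z\text{-}\dm_R M \leq G\text{-}\dm_R M < \infty$, so $M \in \mathcal{\tilde{Z}}$. The bulk of the proof is the three closure assertions, and my plan is to upgrade the argument used for Proposition 1.10 from $\z$-dimension zero to arbitrary finite $\z$-dimension.

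Given a short exact sequence $0 \to E \to L \to M \to 0$ of finitely generated $R$-modules, I would reuse the same commutative ladder relating the long exact sequences of $\e_R^*(-,k)$ and $\E_R^*(-,k)$ via the natural transformations $\z^i$. Suppose two of the three modules have $\z$-dimension at most $n$; then the corresponding vertical arrows are isomorphisms for every $i > n$ and epimorphisms at $i = n$. A degree-by-degree diagram chase, using the five-lemma (isomorphism version) in sufficiently high degrees and the four-lemma in its monomorphism and epimorphism variants near the critical index, will show that the third module has $\z^i$ an isomorphism for all $i$ large enough and an epimorphism at some finite critical degree---which is precisely what it means for that module to lie in $\mathcal{\tilde{Z}}$. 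Each of the three positions of the unknown module (middle for extensions, left for kernels of epimorphisms, right for cokernels of monomorphisms) should be treated by this same template.

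The step that I expect to require real care is the case of cokernels of monomorphisms, where the natural five-lemma input would involve the unknown module $M$ itself in the very degree $i+1$ that one is trying to control. I would therefore split this case into two stages: first apply a four-lemma to show that $\z^i(M)$ is a monomorphism in a shifted range of degrees (using the already-known behaviour of $\z^i(E)$ and $\z^i(L)$), and then feed this monomorphism into a second four-lemma to promote $\z^i(M)$ to an epimorphism in the same range. The price one may pay is a shift of the upper bound by one relative to $\max\bigl(\z\text{-}\dm_R E, \z\text{-}\dm_R L\bigr)$, but the finiteness---which is all that the proposition requires---is preserved. Beyond this bookkeeping the argument is governed entirely by the naturality of $\z^*$ and poses no conceptual obstacle.
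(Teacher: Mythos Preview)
Your proposal is correct and matches the paper's approach exactly: the paper simply says the inclusion follows from Theorem~1.4 and the closure properties are ``easy to see,'' and what you have sketched is precisely the intended diagram chase in the $\z^*$-ladder that fills in that phrase. One minor remark: your worry in the cokernel case is misplaced---when you center the five-term window on $\z^i(M)$, the neighbouring vertical maps are $\z^{i-1}(L),\,\z^{i-1}(E),\,\z^i(L),\,\z^i(E)$, so there is no self-reference to $M$ in degree $i+1$; the only genuine cost is that $\z^{i-1}(E)$ is merely an epimorphism at $i-1=n$, which forces the degree shift you already anticipated, and finiteness survives.
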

\begin{proof}
Last assertion follows from Theorem 1.4. The other ones are easy to see.
\end{proof}
\newpage

\end{document}